\def\natN{  \mathbb{N} }
\newtheorem{theorem}{Theorem}[section]
\newtheorem{lemma}{Lemma}[section]
\title{An application of Baker's method to the Je\'smanowicz' conjecture on primitive Pythagorean triples}
\author{Maohua Le }
\date{17 October 2018}
\begin{document}

\maketitle

\begin{abstract}

Let $m$, $n$ be positive integers such that $m>n$, $\gcd(m,n)=1$ and $m \not\equiv n \bmod 2$.  In 1956, L. Je\'smanowicz \cite{Jes} conjectured that the equation $(m^2 - n^2)^x + (2mn)^y = (m^2+n^2)^z$ has only the positive integer solution $(x,y,z) = (2,2,2)$.  This problem is not yet solved.  
In this paper, combining a lower bound for linear forms in two logarithms due to M. Laurent \cite{Lau} with some elementary methods, we prove that if $mn \equiv 2 \bmod 4$ and $m > 30.8 n$, 
then Je\'smanowicz' conjecture is true.     
\end{abstract}

\bigskip

Mathematics Subject Classification: 11D61, 11J86  

Keywords:  ternary purely exponential diophantine equation, primitive Pythagorean triple, Je\'smanowicz' conjecture, application of Baker's method

\section{Introduction}  

Let $\natN$ be the set of positive integers.  Let $m$, $n$ be positive integers such that $m>n$, $\gcd(m,n)=1$ and $m \not\equiv n \bmod 2$.  It is well known that the triple $(m^2-n^2, 2mn, m^2+n^2)$ is a primitive Pythagorean triple with $(m^2 - n^2)^2 + (2mn)^2 = (m^2+n^2)^2$.  In 1956 L. Je\'smanowicz \cite{Jes} conjectured that the equation 
$$(m^2 - n^2)^x + (2mn)^y = (m^2+n^2)^z, x,y,z \in \natN  \eqno{(1.1)} $$ 
has only the solution $(x,y,z) = (2,2,2)$.  This problem is not solved yet.  

For over twenty years, many papers have investigated Je\'smanowicz' conjecture for the case that $mn \equiv 2 \bmod 4$.  In this respect, Je\'smanowicz' conjecture is true in the following cases:
\begin{enumerate}[label=(\roman*)]
  \item (M.-H. Le \cite{Le8})  $mn \equiv 2 \bmod 4$ and $m^2+n^2$ is an odd prime power. 
  \item (Z.-F. Cao \cite{Cao})  $(m,n) \equiv (5,2) \bmod 8$.
  \item (M.-J. Deng and D.-M. Huang \cite{DeHu}) $mn \equiv (2,3) \bmod 4$ and either $m+n \not\equiv 1 \bmod 16$ or $y>1$.  
  \item (M.-M. Ma and Y.-G. Chen \cite{MaCh})  $mn \equiv 2 \bmod 4$ and $y>1$.
  \item (K. Takakuwa and Y. Asaeda \cite{TaAs}) $m \equiv 2 \bmod 4$, $n=3$ and $m$ satisfies some conditions.  
  \item (Y.-D. Guo and M.-H. Le \cite{GuLe}) $m \equiv 2 \bmod 4$, $n=3$ and $m>6000$.  
  \item (K. Takakuwa \cite{Tak}) $m \equiv 2 \bmod 4$ and $n \in \{ 3,7,11,15 \}$. 
  \item (N. Terai \cite{Ter}) $n=2$.
  \item (M.-J. Deng and J. Guo \cite{DeGu}) $n\equiv 2 \bmod 4$ and $n < 600$.  
  \item (M.-H. Le \cite{Le9}) $(m,n) \equiv (2,3) \bmod 4$ and $m > 81 n$.  
  \item (T. Miyazaki and N. Terai \cite{MiTe})  $n \equiv 2 \bmod 4$, $m>72n$ and the divisors of $n$ satisfy some conditions. 
  \item (P.-Z. Yuan and Q. Han \cite{YuHa}) $mn \equiv 2 \bmod 4$, $m>72n$ and the divisors of $m$, $n$ satisfy some conditions.  
\end{enumerate} 

In this paper, combining a lower bound for linear forms in two logarithms due to M. Laurent \cite{Lau} with some elementary methods, we improve the results of \cite{Le9}, \cite{MiTe} and \cite{YuHa} as follows:

\begin{theorem} 
If $mn \equiv 2 \bmod 4$ and $m > 30.8 n$, then Je\'smanowicz' conjecture is true.  
\end{theorem}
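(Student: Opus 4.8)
The plan is to collapse the problem to the single stubborn case $y=1$ and then pit an elementary lower bound for the exponent $z$ against an upper bound extracted from Baker's method. Write $a=m^2-n^2$, $b=2mn$, $c=m^2+n^2$, so that $a^2+b^2=c^2$, $c>a$ and $b<c$. By the theorem of Ma and Chen (item (iv) of the list above), under $mn\equiv 2\bmod 4$ every solution of $(1.1)$ with $y>1$ already equals $(2,2,2)$; hence it suffices to show that $a^x+2mn=c^z$ has no solution in positive integers once $m>30.8\,n$. I would assume such a solution exists and aim to deduce $m\le 30.8\,n$, a contradiction.

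First I would fix the coarse shape of $(x,z)$ by elementary congruences. Since $c\equiv 1\bmod 4$, $c\equiv 5\bmod 8$ and $2mn\equiv 4\bmod 8$, reducing $a^x+2mn=c^z$ modulo $4$ and modulo $8$ (treating $m\equiv 2\bmod 4$ and $n\equiv 2\bmod 4$ separately, with $a\equiv 3$ or $5\bmod 8$) constrains the parities of $x$ and $z$, while reductions modulo $m$, modulo $n$, modulo $a=c-2n^2$ and modulo $c=a+2n^2$, with the attendant Jacobi symbols, give divisibility relations between $x$ and $z$. Combined with size comparison these dispose of the small cases: $z=1$ is impossible for $m>n$, and for $z=2$ one gets $a^x=a^2+2mn(2mn-1)$, so $x=2$ would force $c(c-1)=b(b-1)$ (impossible as $c>b$), whereas $x\ge 3$ contradicts $a^x<a^2+(2mn)^2=c^2$ whenever $c^2<a^3$, which holds comfortably for $m>30.8\,n$. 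Thus I may assume $z\ge 3$, $x\ge 3$. The decisive elementary input is a lower bound for $z$. Set $\Lambda=z\log c-x\log a>0$ and $d=x-z$. The cases $d<0$ and $d=0$ are excluded by $c^z-a^{z-1}\gg 2mn$ and $c^z-a^z\ge c^2-a^2=(2mn)^2>2mn$, so $d\ge 1$. Then $\Lambda=z\log(c/a)-d\log a>0$ yields
$$ z>\frac{d\log a}{\log(c/a)}\ge\frac{\log(m^2-n^2)}{\log\frac{m^2+n^2}{m^2-n^2}}, $$
which for $m>30.8\,n$ is enormous, of order $(m/n)^2\log m$.

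For the opposing upper bound I would use the equation: $\Lambda=\log\!\bigl(1+2mn/a^x\bigr)<2mn/a^{\,z+1}$, hence $-\log\Lambda>(z+1)\log a-\log(2mn)$. Against this I would apply M.~Laurent's lower bound for linear forms in two logarithms \cite{Lau} to $\Lambda=z\log c-x\log a$, taking $\alpha_1=a$, $\alpha_2=c$, $b_1=x$, $b_2=z$, heights $\log a$, $\log c$, and $b'=x/\log c+z/\log a$, which gives
$$ -\log\Lambda\le C\,(\log a)(\log c)\,(\log b'+\kappa)^2 $$
with explicit $C,\kappa$ coming from an optimal choice of Laurent's auxiliary parameter. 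Feeding in $x=z+d$ and the lower bound $z\asymp d\log a/\log(c/a)$, one checks $b'\asymp z/\log m$ and, crucially, $\log b'\approx 2\log(m/n)+O(1)$ with the $\log\log m$ terms cancelling. Substituting the elementary lower bound for $z$ into this Baker upper bound makes the factor $\log m$ cancel on both sides, leaving an inequality purely in the ratio $m/n$, schematically $d\,(m/n)^2\le 2C\,(2\log(m/n)+O(1))^2$. For $d\ge 1$ this bounds $m/n$ by an absolute constant; inserting Laurent's explicit constants and the exact estimates for $\log a$, $\log c$ and $\log(c/a)$ valid throughout $m/n>30.8$ should force $m\le 30.8\,n$.

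The main obstacle is calibrating this absolute constant down to exactly $30.8$. The qualitative contradiction is robust — the cancellation of $\log m$ is what allows a threshold on $m/n$ independent of the (astronomically large) exponents — but pinning the numerical value requires replacing every asymptotic estimate by a genuine inequality valid on the whole range $m/n>30.8$: sharp two-sided bounds for $\log\frac{m^2+n^2}{m^2-n^2}$, $\log a$, $\log c$ and $b'$, the sharpest available values of $C$ and $\kappa$ in Laurent's theorem with the parameter optimized for this height configuration, and a verification that the preliminary congruence and size analysis truly leaves only the case $z\ge 3$ for $m>30.8\,n$. I expect this numerical optimization and bookkeeping, rather than any conceptual gap, to be where the real work lies.
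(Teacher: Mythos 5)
Your outline reproduces the paper's strategy (reduce to $y=1$ via Ma--Chen, then pit an elementary lower bound for $z/\log a$, growing like $(m/n)^2$, against an absolute upper bound from Laurent's theorem), but there is a genuine gap at the one step that actually determines the constant $30.8$. Your elementary lower bound uses only $d=x-z\ge 1$, giving $z/\log a > d/\log(c/a) > \tfrac{1}{2}(k^2-1)$ with $k=m/n$, which is about $474$ at $k=30.8$. The Baker-side upper bound, with Laurent's parameters tuned as sharply as the paper manages (Lemma 2.8), says $t=z/\log a$ must satisfy $t < 0.1266 + 14.8365\,(1.8248+\log 2t)^2$, whose crossover sits just under $1420$. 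So with $d\ge 1$ alone there is no contradiction at $k=30.8$: your method as written only yields the theorem for roughly $m>53n$, and no realistic retuning of Laurent's parameters recovers a factor of $3$ in this constant. The paper's essential extra input, absent from your proposal, is that $x-z\ge 3$. Parity ($x$ even, $z$ odd, from Ma--Chen's Lemma 2.6) makes $x-z$ odd, killing $x-z=2$; and $x-z=1$ is excluded by combining the congruence $(m^2+n^2)^{x-z}\equiv 1 \bmod 2mn$ --- which follows from $(m^2-n^2)^2\equiv(m^2+n^2)^2 \bmod 2mn$, $x$ even, and the equation itself --- with Terai's theorem (Lemma 2.5) that Je\'smanowicz' conjecture holds whenever $m^2+n^2\equiv 1 \bmod 2mn$. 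This triples the lower bound to $\tfrac{3}{2}(k^2-1) > 1421.46$ at $k>30.8$, just above the Baker threshold; that near-coincidence is exactly where $30.8$ comes from.

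A second, quieter gap: to get Laurent's constants as sharp as $14.8365$ the paper needs $\min\{m^2-n^2,\,m^2+n^2\}\ge 2704$, which does \emph{not} follow from $m>30.8n$ alone (nothing in your setup prevents $n=1$). The paper gets it from prior results (Deng--Guo, Yuan--Han, Han--Yuan; its Lemmas 2.1--2.4), which force $n\ge 85$ in any putative counterexample with $mn\equiv 2\bmod 4$. Your plan never invokes these, and without them the numerical constants degrade enough to spoil the already razor-thin margin ($1421.46$ versus a crossover near $1418$). Finally, a technical note: ``substituting the lower bound for $z$ into the upper bound'' is not literally licit, since $z$ has no a priori upper bound; one must argue, as the paper does, that $f(t)=t-0.1266-14.8365(1.8248+\log 2t)^2$ is increasing and positive for all $t>1420$, so that the Baker inequality $f(t)<0$ caps $t$ below the elementary lower bound. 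These are fixable only by importing the specific lemmas above; the conceptual skeleton you describe is right, but without $x-z\ge 3$ and the $n\ge 85$ input the threshold $30.8$ is out of reach.
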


\section{Preliminaries}  

\begin{lemma}[\cite{DeGu}] 
If $n \equiv 2 \bmod 4$ and $n<600$ then Je\'smanowicz' conjecture is true.
\end{lemma}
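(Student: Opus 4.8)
The plan is to reduce the statement to a finite verification, using the structural results already available to strip away everything but a bounded search. Write $a = m^2 - n^2$, $b = 2mn$, $c = m^2 + n^2$. Since $n \equiv 2 \bmod 4$ and $\gcd(m,n) = 1$, the integer $m$ is odd, so $mn \equiv 2 \bmod 4$; hence case (iv) above (Ma and Chen) shows that no solution other than $(2,2,2)$ has $y > 1$, and I am left to rule out solutions of $a^x + 2mn = c^z$ with $y = 1$. The smallest exponents go by a size comparison: $z = 1$ forces $m = n$ when $x = 1$ and is impossible by magnitude when $x \ge 2$, while $x = 1$ cannot meet $z \ge 2$; so I may assume $x \ge 2$ and $z \ge 2$.

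Next I would extract parity, size, and order information. Reducing $a^x + 2mn = c^z$ modulo $8$, where $a \equiv c \equiv 5$ and $2mn \equiv 4$, and using $5^2 \equiv 1 \pmod 8$, shows that $x$ and $z$ cannot both be even nor both be odd, so $x + z$ is odd. A crude estimate forces $z < x$: since $c^x - a^x \ge 2n^2 x (m^2 - n^2)^{x-1}$ already exceeds $2mn$ for $x \ge 2$, the equality $c^z = a^x + 2mn$ is incompatible with $z \ge x$. Reducing modulo $m$ (where $a \equiv -n^2$, $c \equiv n^2$, $b \equiv 0$) gives $n^{2(x-z)} \equiv (-1)^x \pmod m$, so $m$ divides $n^{2(x-z)} \mp 1$ and in particular $m \le n^{2(x-z)} + 1$; reducing modulo $n$ gives $m^{2(x-z)} \equiv 1 \pmod n$. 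Thus, once the gap $d := x - z$ is fixed, both $m \bmod n$ and the size of $m$ are tightly constrained for each $n$.

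The endgame, for each fixed $n \equiv 2 \bmod 4$ with $n < 600$, is then a finite search: having bounded $d$, the inequality $m \le n^{2d} + 1$ leaves finitely many $(m,d)$, and the window $0 < x\log(c/a) - d\log c < \log 2$ (coming from $a^x < c^z < 2a^x$) confines $x$, hence $z$, to a short range for each pair, which is checked directly. Before any search, the residue conditions $a^x \equiv -2mn \pmod c$ and $c^z \equiv 2mn \pmod a$, read through Jacobi symbols such as $\left(\frac{-2mn}{c}\right)$ and $\left(\frac{2mn}{a}\right)$, eliminate most classes of $m$. The hard part will be exactly the bounding of $d$: the order conditions alone do not cap $x - z$, so making the search genuinely finite requires an analytic input---either a linear-forms-in-logarithms estimate in the spirit of the main theorem, or a sharper elementary inequality---and checking that this bound, together with the quadratic-residue sieve, leaves no sporadic solution uniformly across all admissible $n$ is the crux of the argument.
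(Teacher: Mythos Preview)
The paper does not prove this lemma at all: it is quoted from Deng--Guo \cite{DeGu} and used as a black box, so there is no in-paper argument against which to compare your approach. What you have written is an outline of the standard strategy rather than a proof, and you say so yourself. The reduction to $y=1$ via Ma--Chen, the elimination of $x=1$ or $z=1$, the modulo-$8$ parity, the inequality $z<x$, the congruences modulo $m$ and $n$, and the Jacobi-symbol sieve are all correct and are indeed among the ingredients one expects in such an argument. But the decisive step---a uniform upper bound on $d=x-z$---is absent, and without it nothing is finite: your bound $m\le n^{2d}+1$ grows without limit in $d$, and for each admissible $(m,n)$ the window for $x$ has length $\log 2/\log(c/a)$, which is of order $m^{2}/n^{2}$ and hence also unbounded. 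A Baker-type estimate (for instance Lemma~2.7 or 2.8 of the present paper applied to $\Lambda=z\log c-x\log a$) is precisely what closes this gap, but it must actually be carried out with explicit constants before any ``finite verification'' exists.

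Two smaller remarks. First, the full Ma--Chen result (Lemma~2.6 here) already gives $x$ even and $z$ odd, which is strictly sharper than your modulo-$8$ conclusion that $x+z$ is odd; using it from the start streamlines the Jacobi step to the single condition $\bigl(\frac{-2mn}{c}\bigr)=1$. Second, even once an analytic bound on $d$ is secured, the residual search over all $n\equiv 2\pmod 4$, $n<600$, all admissible $m\le n^{2d}+1$, and all $x$ in the corresponding window is the actual content of the lemma; ``is checked directly'' has to be replaced by an explicit computation or a citation to one.
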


\begin{lemma}[Corollary 1.1 of \cite{YuHa}]  
If $m \equiv 2 \bmod 4$, 
$n \not\equiv 1 \bmod 8$ and $n<85$, then Je\'smanowicz' conjecture is true.
\end{lemma}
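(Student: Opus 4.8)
The statement is Corollary 1.1 of \cite{YuHa}, so I would ultimately invoke that reference; here I sketch the argument I would reconstruct. Since $m\equiv 2\pmod 4$ and $\gcd(m,n)=1$, the integer $n$ is odd, and $n\not\equiv 1\pmod 8$ means $n\equiv 3,5,7\pmod 8$. As $m\equiv 2\pmod 4$ forces $mn\equiv 2\pmod 4$, the theorem of Ma and Chen (\cite{MaCh}, item (iv) of the introduction) settles every solution of $(1.1)$ with $y>1$, so it suffices to show that
\[
 (m^2-n^2)^x+2mn=(m^2+n^2)^z
\]
has no solution $(x,z)\in\natN^2$. Write $a=m^2-n^2$, $b=2mn$, $c=m^2+n^2$; from $m\equiv 2\pmod 4$ and $n$ odd one gets $a\equiv 3\pmod 8$, $b\equiv 4\pmod 8$ and $c\equiv 5\pmod 8$. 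Reducing the displayed equation modulo $4$ gives $(-1)^x\equiv 1$, so $x$ is even; reducing modulo $8$ then gives $c^z\equiv 1+4\equiv 5\pmod 8$, so $z$ is odd. Since $x$ is even, $c^z=a^x+2mn>a^2>c$ rules out $z=1$, whence $z\ge 3$.

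My first step is a quadratic-residue obstruction modulo $a$. The displayed equation gives $c^z\equiv 2mn\pmod a$, and $c=a+2n^2\equiv 2n^2\pmod a$; since $\gcd(2mn,a)=1$ and $z$ is odd,
\[
 \Jacobi{2mn}{a}=\Jacobi{c^z}{a}=\Jacobi{c}{a}^{\,z}=\Jacobi{c}{a}=\Jacobi{2n^2}{a}=\Jacobi{2}{a}.
\]
Because $\Jacobi{2mn}{a}=\Jacobi{2}{a}\Jacobi{m}{a}\Jacobi{n}{a}$, this forces $\Jacobi{m}{a}\Jacobi{n}{a}=1$. Evaluating the two symbols by reciprocity (using $a\equiv 3\pmod 8$, $m=2m'$ with $m'$ odd, and $\gcd(m,n)=1$), I expect to find $\Jacobi{m}{a}=\Jacobi{2}{a}=-1$ and $\Jacobi{n}{a}=\Jacobi{-1}{n}$, so the constraint becomes $\Jacobi{-1}{n}=-1$, i.e.\ $n\equiv 3\pmod 4$. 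This already disposes of the case $n\equiv 5\pmod 8$ (where $n\equiv 1\pmod 4$) for \emph{all} admissible $m$, with no size restriction.

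The remaining and hardest case is $n\equiv 3\pmod 4$, that is $n\equiv 3,7\pmod 8$, where the Jacobi obstruction is silent; here I would combine a gap principle with Baker's method. Since $2mn<a^x$, the displayed equation gives $a^x<c^z<2a^x$, so $0<z\log c-x\log a<\log 2$. Laurent's lower bound \cite{Lau} for this linear form in the two logarithms $\log a,\log c$ bounds $\max(x,z)$ by an explicit function of $\log m$, so in particular $x\log(c/a)<\log c$ once $m$ is large. Writing the inequalities in terms of $d=x-z$, they confine $d$ to the interval $\bigl(\tfrac{x\log(c/a)-\log 2}{\log c},\tfrac{x\log(c/a)}{\log c}\bigr)$, which has length $\log 2/\log c<1$ and lies in $(-1,1)$; hence the only possible value is $d=0$, contradicting that $x$ is even and $z$ odd. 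This kills every solution with $m$ beyond an explicit bound depending on $n$, and for each $n<85$ with $n\equiv 3,7\pmod 8$ the finitely many remaining $m$ are checked directly. The main obstacle is exactly this last case: the equation has no trivial solution to anchor it and the exponents obey the near-equality $z\log c\approx x\log a$, so defeating it requires the full strength of the linear-forms estimate together with a nontrivial finite verification---precisely the content of \cite{YuHa}.
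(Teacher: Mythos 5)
The paper does not prove this statement at all: Lemma 2.2 is imported verbatim, with attribution, as Corollary 1.1 of \cite{YuHa}, and its only role is to feed the bound $n\ge 85$ into Lemma 2.4. So the only ``proof'' in the paper is the citation itself; your stated intention to invoke that reference is exactly what the paper does, and your reconstruction has to be judged on its own merits.

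On those merits, the first half of your sketch is sound and the second half has genuine gaps. The reduction via Ma--Chen to $y=1$, $x$ even, $z$ odd, and the Jacobi-symbol computation are correct: one does get $\Jacobi{m}{a}=\Jacobi{2}{a}=-1$ and $\Jacobi{n}{a}=\Jacobi{-1}{n}$, so a solution forces $n\equiv 3\bmod 4$, which eliminates $n\equiv 5\bmod 8$ for every admissible $m$. The case $n\equiv 3\bmod 4$, however, is not closed by what you wrote. First, a technical but real flaw: from $a^x<c^z<2a^x$ alone, i.e.\ $0<\Lambda<\log 2$, Laurent's theorem yields no bound on $\max(x,z)$ whatsoever --- a lower bound for $\log|\Lambda|$ only produces information when played against an upper bound that \emph{decays} in the exponents. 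You need $\Lambda<4mn/c^z$ (exactly the paper's (3.9)--(3.10)) to conclude $z/\log a<K$ for an absolute constant $K$ (about $1.4\times 10^3$ in the paper's own computation). Second, and more seriously: granting such a $K$, your interval argument forcing $x-z=0$ requires $x\log(c/a)<\log c$, which holds only when roughly $m>\sqrt{2K+1}\,n\approx 53n$. (Note also that the hypotheses of the paper's Lemma 2.8, namely $\min\{a_1,a_2\}\ge 2704$ and $b_2/A_1>240$, were derived there from $n\ge 85$, which you cannot assume; they need separate justification in your setting.) The complementary range $n<m\le 53n$, over the twenty-one values $n\equiv 3\bmod 4$ with $n<85$, comprises thousands of pairs $(m,n)$, each of which is itself an exponential Diophantine equation requiring exponent bounds plus a sieve or computation; asserting that these ``are checked directly'' is an unexecuted verification, and it is precisely the content for which the reference must be cited. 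So as a self-contained argument your proposal establishes a weaker statement (the lemma for $n\equiv 1\bmod 4$, and for $n\equiv 3\bmod 4$ only when $m$ exceeds a large multiple of $n$), not the lemma itself.
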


\begin{lemma}[\cite{HaYu}]  
If $mn \equiv 2 \bmod 4$ and $m+n$ has a prime divisor $p$ with $p \not\equiv 1 \bmod 16$, then Je\'smanowicz' conjecture is true.
\end{lemma}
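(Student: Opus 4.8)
The plan is to assume that $(1.1)$ has a solution $(x,y,z)\neq(2,2,2)$ and to show that then \emph{every} prime divisor $p$ of $m+n$ must satisfy $p\equiv1\bmod16$, contradicting the hypothesis. Throughout put $a=m^2-n^2$, $b=2mn$, $c=m^2+n^2$, and fix a prime $p\mid m+n$. Since $m\not\equiv n\bmod2$ and $\gcd(m,n)=1$, the integer $m+n$ is odd and coprime to $mn$, so $p$ is odd and $p\nmid bc$; reducing modulo $p$ via $m\equiv-n$ gives $a\equiv0$, $c\equiv2n^2$ and $b\equiv-2n^2\equiv-c\pmod p$. The essential algebraic input is the pair of identities $c-b=(m-n)^2$ and $c+b=(m+n)^2$, so that with $e=\ord_p(m+n)\ge1$ one in fact has the sharpened congruence $b\equiv-c\pmod{p^{2e}}$, while $\ord_p(a)=e$ because $p\nmid m-n$.

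First I would pin down the parities of the exponents. By the result of Ma and Chen \cite{MaCh}, if $mn\equiv2\bmod4$ and $y>1$ then $(1.1)$ has only the solution $(2,2,2)$; hence it suffices to treat $y=1$. A $2$-adic analysis of $(1.1)$ then drives the argument: since $mn\equiv2\bmod4$ one has $4\,\|\,b$ and $c\equiv5\bmod8$, whence reduction modulo $4$, $8$ and $16$ forces $x$ to be even and $z$ to be odd when $m\equiv2\bmod4$, the case $n\equiv2\bmod4$ being reduced to this after a parallel residue discussion. With $y=1$, $x$ even and $z$ odd in hand, I combine $c\equiv-b\pmod{p^{2e}}$ with $\ord_p(a^{x})=ex\ge2e$ in the rearranged equation $a^{x}=c^{z}-b$: expanding $c^{z}\equiv(-b)^{z}=-b^{z}\pmod{p^{2e}}$ and using $p\nmid b$ yields the key congruence
\[ b^{\,z-1}\equiv-1\pmod{p^{2e}} . \]

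The heart of the proof is then a power-residue ladder applied to this congruence. Since $z-1$ is even, $-1\equiv\bigl(b^{(z-1)/2}\bigr)^{2}$ is a quadratic residue modulo $p$, so $\Jacobi{-1}{p}=1$, that is $p\equiv1\bmod4$ and $p$ splits in $\intZ[i]$. To climb further I would apply the quartic and then the octic residue characters to the same congruence $b^{z-1}\equiv-1\pmod p$, evaluating them through the explicit symbols $\Jacobi{-1}{p}$ and $\Jacobi{2}{p}$ (which are determined by $p\bmod8$), together with $b\equiv-2n^{2}\pmod p$ and the constraints on $z$ modulo $8$ coming from a finer reduction of $(1.1)$ modulo $16$ and $32$. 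Carried out, these identities are mutually consistent only when $-1$ is an eighth-power residue modulo $p$, equivalently $\ord_2(p-1)\ge4$, i.e.\ $p\equiv1\bmod16$. Running the same ladder for every prime divisor of $m+n$ would then contradict the existence of a divisor $p\not\equiv1\bmod16$, and the conjecture follows.

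I expect the octic rung to be the main obstacle: controlling the quartic and octic residue character of $b=2mn$ — which forces one to keep careful track of $\Jacobi{2}{p}$, of the quartic symbol of $-1$, and of the behaviour of $z$ modulo $8$ — is precisely the step that the hypothesis $p\not\equiv1\bmod16$ is tailored to block, and it is where the case distinction $m\equiv2\bmod4$ versus $n\equiv2\bmod4$ (and, for the latter, the subcase $x$ odd, $z$ even) demands the most care. By contrast, the reduction to $y=1$ and the quadratic rung $p\equiv1\bmod4$ are routine.
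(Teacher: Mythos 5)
A preliminary remark: the paper does not prove this statement at all. It is Lemma~2.3, quoted as a known theorem of Han and Yuan \cite{HaYu}, so there is no proof in the text to compare against and your argument has to stand on its own. The first half of your sketch is sound and is surely how any proof must begin: the reduction to $y=1$, $2\mid x$, $2\nmid z$ (better quoted directly from Lemma~2.6, i.e.\ \cite{MaCh}, since your own mod-$8$ re-derivation of the parities actually fails when $n\equiv 2\bmod 4$: there $m^2-n^2\equiv 5\bmod 8$ and the pattern ``$x$ odd, $z$ even'' also survives reduction modulo $8$), then $b\equiv -c\pmod{p^{2e}}$ from $c+b=(m+n)^2$, the key congruence $b^{z-1}\equiv c^{z-1}\equiv -1\pmod{p^{2e}}$, and the conclusion $p\equiv 1\bmod 4$ are all correct.

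The genuine gap is everything above the quadratic rung, and that is where the theorem lives. Write $2^s\,\|\,p-1$ and let $t$ be the index of $c\equiv -b$ with respect to a primitive root modulo $p$; your congruence says exactly $\ord_2(t)+\ord_2(z-1)=s-1$, and you must force this sum to be at least $3$. You propose to do this by evaluating quartic and octic characters of $b\equiv -2n^2$ ``through $\Jacobi{-1}{p}$ and $\Jacobi{2}{p}$, which are determined by $p\bmod 8$,'' together with constraints on $z$ modulo $8$ from (1.1) reduced modulo $16$ and $32$. Neither ingredient exists in the form you need. The quartic character of $2$ modulo $p$ is \emph{not} determined by the residue of $p$ modulo any power of $2$: by Gauss's criterion it depends on the representation $p=A^2+B^2$ ($2$ is a fourth power modulo $p$ iff $8\mid B$), the octic character lies deeper still, and the quadratic character of $n$ modulo $p$, which enters through $b\equiv-2n^2$, is not controlled by your hypotheses either. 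Likewise, reducing (1.1) modulo $16$ or $32$ does not pin down $z$ modulo $4$, let alone modulo $8$; it only ties the residue of $z$ to those of $x$, $m$, $n$ (for instance, with $n\equiv 1\bmod 8$, $m\equiv 2\bmod 8$ and $x\equiv 2\bmod 4$, the equation modulo $16$ forces $z\equiv 3\bmod 4$). Concretely, the two dangerous cases $p\equiv 5\bmod 8$ (which requires only $\ord_2(t)=0$ and $z\equiv 3\bmod 4$) and $p\equiv 9\bmod 16$ (which requires only $\ord_2(t)=1$ and $z\equiv 3\bmod 4$) are fully consistent with every fact you have established, so your toolkit cannot exclude them. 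Closing exactly this gap is the content of \cite{HaYu}, whose argument rests on genuine quartic-residue machinery (Gauss-type criteria in $\intZ[i]$) rather than rational Legendre symbols and congruence conditions on $z$; since you yourself concede that the ``octic rung'' is the main obstacle and do not carry it out, the proposal as it stands is a correct setup plus an unproved claim, not a proof.
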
 

\begin{lemma} 
If $mn \equiv 2 \bmod 4$, $m>30.8 n$ and (1.1) has a solution $(x,y,z) \ne (2,2,2)$, then $m^2+n^2 > m^2-n^2 \ge  2704$.  
\end{lemma}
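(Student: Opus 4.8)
The plan is to prove the nontrivial half of the statement, namely $m^2-n^2\ge 2704$, by first forcing $n\ge 85$ and then invoking the hypothesis $m>30.8\,n$. The inequality $m^2+n^2>m^2-n^2$ is immediate from $n\ge 1$, so I would dispose of it in one line and concentrate on the lower bound. Throughout I assume, as in the statement, that (1.1) has a solution $(x,y,z)\ne(2,2,2)$, so that Je\'smanowicz' conjecture fails for this particular pair $(m,n)$; consequently the hypotheses of Lemmas 2.1, 2.2 and 2.3 must all fail. Since $mn\equiv 2\bmod 4$ while $\gcd(m,n)=1$ and $m\not\equiv n\bmod 2$, exactly one of $m,n$ is $\equiv 2\bmod 4$ and the other is odd, and I would split the argument along these two cases.

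In the case $n\equiv 2\bmod 4$ (so $m$ is odd) the bound is essentially free: if $n<600$ then Lemma 2.1 would make the conjecture true, contrary to assumption, so $n\ge 600\ge 85$. The substantive case is $m\equiv 2\bmod 4$ with $n$ odd. Here the failure of Lemma 2.2 forces $n\equiv 1\bmod 8$ \emph{or} $n\ge 85$, while the failure of Lemma 2.3 forces every prime divisor of $m+n$ to be $\equiv 1\bmod 16$. A product of primes each $\equiv 1\bmod 16$ is itself $\equiv 1\bmod 16$, hence $m+n\equiv 1\bmod 8$. On the other hand $m\equiv 2\bmod 4$ gives $m\equiv 2$ or $6\pmod 8$, so if $n\equiv 1\bmod 8$ then $m+n\equiv 3$ or $7\pmod 8$, a contradiction. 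Thus $n\equiv 1\bmod 8$ is impossible and the only surviving alternative is $n\ge 85$.

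Having established $n\ge 85$ in both cases, I would finish with a short computation: from $m>30.8\,n$ we get $m+n>30.8\,n+n=31.8\,n\ge 31.8\times 85=2703$, and since $m+n$ is an integer this yields $m+n\ge 2704$. As $m>n$ forces $m-n\ge 1$, it follows that $m^2-n^2=(m-n)(m+n)\ge m+n\ge 2704$, which is exactly the desired bound (and explains the precise value $2704=31.8\times 85$ rounded up).

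The main obstacle is the congruence argument in the case $m\equiv 2\bmod 4$: the delicate point is to notice that Lemma 2.3, stated as a condition $p\not\equiv 1\bmod 16$, when negated pins down $m+n$ modulo $8$, and that this residue is incompatible with $n\equiv 1\bmod 8$ together with $m\equiv 2\bmod 4$. Once this incompatibility is seen, Lemma 2.2 leaves no option but $n\ge 85$, and everything else is routine arithmetic.
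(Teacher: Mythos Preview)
Your proof is correct and follows essentially the same approach as the paper: use Lemma 2.3 (negated) to force $m+n\equiv 1\bmod 16$, combine this with $m\equiv 2\bmod 4$ to rule out $n\equiv 1\bmod 8$, then apply Lemma 2.2 to get $n\ge 85$, handle the case $n\equiv 2\bmod 4$ via Lemma 2.1, and finish with $m^2-n^2\ge m+n>31.8\times 85=2703$. The only cosmetic difference is that the paper works modulo $4$ (concluding $n\equiv 3\bmod 4$, hence $n\not\equiv 1\bmod 8$) whereas you work directly modulo $8$; the content is identical.
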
 

\begin{proof}
By Lemma 2.3, if $mn \equiv 2 \bmod 4$ and (1.1) has a solution $(x,y,z)$ with $(x,y,z) \ne (2,2,2)$, then every prime divisor $p$ of $m+n$ satisfies $p \equiv 1 \bmod 16$.  This implies that $m+n \equiv 1 \bmod 16$.  Hence, we have $m+n \equiv 1 \bmod 4$ and $(m,n) \equiv (2,3)$ or $(3,2) \bmod 4$.  Therefore, if $m \equiv 2 \bmod 4$, then $n \equiv 3 \bmod 4$ and 
$$ n \ge 85.  \eqno{(2.1)} $$
Similarly, by Lemma 2.1, (2.1) holds if $n \equiv 2 \bmod 4$.  Thus, since $m > 30.8 n$, we get from (2.1) that 
$$ m^2 + n^2 > m^2 - n^2 \ge m+n > 31.8 n \ge 31.8 \times 85 = 2703.  \eqno{(2.2)} $$
\end{proof}

\begin{lemma}[\cite{Ter}]  
If $m^2+n^2 \equiv 1 \bmod 2mn$, then Je\'smanowicz' conjecture is true.
\end{lemma}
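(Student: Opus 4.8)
The plan is to assume, for contradiction, that (1.1) has a solution $(x,y,z) \ne (2,2,2)$ and to squeeze the exponents down to $(2,2,2)$ by a chain of congruences exploiting the hypothesis. Throughout write $a = m^2 - n^2$, $b = 2mn$, $c = m^2 + n^2$, so that $a^2 + b^2 = c^2$ and, since $\gcd(m,n) = 1$ with $m \not\equiv n \bmod 2$, the numbers $a$, $b$, $c$ are pairwise coprime with $a$, $c$ odd and $4 \mid b$. The single most useful consequence of the hypothesis $c \equiv 1 \bmod b$ is that $a^2 = c^2 - b^2 \equiv c^2 \equiv 1 \bmod b$; thus $a$ has multiplicative order dividing $2$ modulo $b$.

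First I would fix the parity of $x$. Reducing (1.1) modulo $b$ and using $y \ge 1$ gives $a^x \equiv c^z \equiv 1 \bmod b$. Since $a^2 \equiv 1 \bmod b$, the order of $a$ modulo $b$ is $1$ or $2$; it cannot be $1$, for $a \equiv 1 \bmod b$ together with $c \equiv 1 \bmod b$ would force $b \mid (c-a) = 2n^2$, i.e. $m \mid n$, contradicting $\gcd(m,n) = 1$ and $m \ge 2$. Hence $a$ has order exactly $2$ modulo $b$, and $a^x \equiv 1 \bmod b$ forces $2 \mid x$. Write $x = 2s$.

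Next I would pin down $y$ and $z$. With $c = 1 + bh$ for the positive integer $h = (c-1)/b$, the binomial expansion gives $c^z \equiv 1 + zbh \bmod{b^2}$, while $a^{2s} = (c^2 - b^2)^s \equiv (1 + 2bh)^s \equiv 1 + 2sbh \bmod{b^2}$. Comparing with (1.1) modulo $b^2$ then yields an explicit congruence linking $z$ to $x$ (differing according to whether $y = 1$ or $y \ge 2$), which I would combine with two further inputs: a reduction modulo $4$ (using $a^x \equiv 1 \bmod 8$, $4 \mid b$, and $c \equiv 1 \bmod 4$), and, for primes $p \mid m+n$, the clean relation $c \equiv -b \bmod p$ (from $m \equiv -n$, which gives $c \equiv 2n^2$ and $b \equiv -2n^2$). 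The last relation turns (1.1) modulo such an (odd) $p$ into $b^{y} \equiv (-1)^z b^{z} \bmod p$, whence the Jacobi-symbol identity $\Jacobi{b}{p}^{\,y-z} = \Jacobi{-1}{p}^{\,z}$ constrains the parity of $z$. The aim of this bundle of congruences is to force $y = z = 2$, whence $a^x = c^2 - b^2 = a^2$ and $x = 2$.

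The hard part will be the endgame, namely ruling out the large solutions. The natural finish once $y$ and $z$ are even is the difference-of-squares factorization $a^{x} = \bigl(c^{z/2} - b^{y/2}\bigr)\bigl(c^{z/2} + b^{y/2}\bigr)$ into coprime factors; but $a = (m-n)(m+n)$ is composite in general, so one cannot simply declare each factor a power of $a$, and the clean descent that settles $(3,4,5)$ breaks down. I therefore expect the real obstacle to be controlling $y$ and $z$ without that descent: either one sharpens the modulo-$b^2$ (and higher $b$-power) congruences until the relation among $x$, $y$, $z$ becomes rigid, or one supplements them with an elementary size estimate bounding $z$ in terms of $\max(x,y)$ via $c^z = a^x + b^y$. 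It is exactly this growth control that, for the unconditional range of the main theorem, is supplied by Laurent's lower bound for linear forms in two logarithms; here the strong hypothesis $c \equiv 1 \bmod b$ is what one hopes makes an elementary substitute suffice.
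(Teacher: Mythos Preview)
The paper does not prove this lemma; it is simply quoted from Terai \cite{Ter} as an external input, so there is no argument in the paper to compare your proposal against.

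As for the proposal itself, the opening steps are correct and standard: from $c\equiv1\bmod b$ you correctly deduce $a^2\equiv c^2\equiv1\bmod b$, and the argument that $a\not\equiv1\bmod b$ (else $b\mid c-a=2n^2$, forcing $m\mid n$) is fine, so $2\mid x$ follows. The mod-$b^2$ expansion and the reduction modulo a prime $p\mid m+n$ are also set up correctly. However, the proposal is, by your own admission, not a proof: you explicitly describe the endgame as ``what one hopes'' will work, and you identify but do not resolve the real obstacle, namely forcing $y=z=2$ (or otherwise bounding the exponents) without the descent that fails when $a=(m-n)(m+n)$ is genuinely composite. The bundle of congruences you list constrains parities and residues but does not, as stated, pin down $y$ and $z$; you would need to carry the argument further or supply the missing size estimate before this becomes a proof. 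If you want to see how the argument is actually closed, consult Terai's paper directly.
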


\begin{lemma}[\cite{MaCh}] 
If $mn \equiv 2 \bmod 4$ and (1.1) has a solution $(x,y,z)$ with $(x,y,z)\ne (2,2,2)$, then the solution satisfies $x \equiv 0 \bmod 2$, $y=1$ and $z \equiv 1 \bmod 2$.  
\end{lemma}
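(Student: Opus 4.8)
The plan is to write (1.1) as $a^x+b^y=c^z$ with $a=m^2-n^2$, $b=2mn$, $c=m^2+n^2$; these form a primitive triple, so $a,b,c$ are pairwise coprime and $a,c$ are odd. Since $mn\equiv 2\bmod 4$ forces the even member of $\{m,n\}$ to be $\equiv 2\bmod 4$, its square is $\equiv 4\bmod 16$ while the odd member's square is $\equiv 1\bmod 8$; hence $c\equiv 5\bmod 8$ in all cases, and $b=2mn$ satisfies $v_2(b)=2$, i.e. $b\equiv 4\bmod 8$, where $v_2(\cdot)$ denotes the $2$-adic valuation. The only fact I shall need about $a$ is that it is odd.

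The first real step is to prove that $x$ is even, and here I would use a quadratic-residue argument rather than a purely $2$-adic one, since powers of $2$ alone do not decide the parity of $x$ when $n\equiv 2\bmod 4$. Because $c\equiv 5\bmod 8$, and every odd prime factor of a sum of two coprime squares is $\equiv 1\bmod 4$ (hence $\equiv 1$ or $5\bmod 8$), while a product of primes all $\equiv 1\bmod 8$ is $\equiv 1\bmod 8$, the number $c$ must have a prime factor $p\equiv 5\bmod 8$. Reducing (1.1) modulo such a $p$ gives $a^x\equiv -b^y\bmod p$ with $p\nmid ab$, so taking Legendre symbols yields $\Jacobi{a}{p}^{x}=\Jacobi{-1}{p}\Jacobi{b}{p}^{y}$. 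Using $m^2\equiv -n^2\bmod p$ I can evaluate $\Jacobi{a}{p}=\Jacobi{-2}{p}=-1$ and, after writing $m\equiv jn$ with $j^2\equiv -1\bmod p$, also $\Jacobi{b}{p}=1$ and $\Jacobi{-1}{p}=1$ (all three because $p\equiv 5\bmod 8$). Hence $(-1)^x=1$, so $x$ is even.

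Once $x$ is even, reducing (1.1) modulo $8$ settles the remaining assertions cheaply: $a^x\equiv 1\bmod 8$, so $b^y\equiv c^z-1\bmod 8$, and since $c\equiv 5\bmod 8$ this forces $z$ odd exactly when $y=1$ (where $b^y\equiv 4$) and forces $z$ even when $y\ge 2$ (where $b^y\equiv 0$). It therefore remains only to exclude $y\ge 2$ for a nontrivial solution, and this is where I expect the real difficulty. For $y\ge 2$ we then have $x=2u$, $z=2w$, and (1.1) becomes the factorization
$$ (c^{w}-a^{u})(c^{w}+a^{u}) = (2mn)^{y}, \eqno{(\ast)} $$
whose two factors are both even with $\gcd$ exactly $2$; peeling off the powers of $2$ and using $\gcd(m,n)=1$ splits $(\ast)$ into a coprime system of the shape $c^{w}-a^{u}=2^{2y-1}s$, $c^{w}+a^{u}=2t$ (or the reverse) with $st=(mn/2)^{y}$ and $s,t$ odd. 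The hard part will be to run the resulting descent to the end and show that the only solution of this system is the trivial $u=w=1$, $y=2$; this is precisely the content of the Ma--Chen theorem and cannot be reached by congruences alone, so one must combine the coprime factorization with size estimates (or with further Jacobi-symbol and order-of-unit constraints) to pin $y$ down to $2$.
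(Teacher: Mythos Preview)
The paper does not prove this lemma: it is simply quoted, with attribution to Ma and Chen \cite{MaCh}, and used as a black box in Section~3. There is therefore no in-paper argument to compare against.

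Your proposal is correct as far as it goes, but it is not a proof --- and you say so yourself. The Legendre-symbol step is sound: with $p\equiv 5\bmod 8$ dividing $c=m^{2}+n^{2}$ one has $\Jacobi{-1}{p}=1$, $\Jacobi{2}{p}=-1$, and writing $m\equiv jn$ with $j$ of multiplicative order $4$ gives $\Jacobi{j}{p}=-1$ (since $(p-1)/2\equiv 2\bmod 4$), hence $\Jacobi{2mn}{p}=\Jacobi{2}{p}\Jacobi{j}{p}\Jacobi{n^{2}}{p}=1$ and $\Jacobi{a}{p}=\Jacobi{-2}{p}=-1$, forcing $(-1)^{x}=1$. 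The mod-$8$ dichotomy that follows is also correct. The gap is exactly where you flag it: having reduced to the case $y\ge 2$, $x=2u$, $z=2w$, you write down the coprime factorization $(c^{w}-a^{u})(c^{w}+a^{u})=(2mn)^{y}$ and then stop, remarking that finishing ``is precisely the content of the Ma--Chen theorem.'' That is the substantive part of the lemma; without it you have reproved only the easy half. If you want a self-contained argument rather than a citation, you must actually carry the analysis of $(\ast)$ to its conclusion and exclude $y\ge 2$.
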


\begin{lemma} 
Let $a_1$, $a_2$, $b_1$, $b_2$ be positive integers such that $\min\{a_1, a_2\} > 1$ and $\gcd(a_1, a_2)=1$.  Further let $\Lambda = b_1 \log(a_1) - b_2 \log(a_2)$.  Let $\rho$ and $\mu$ be real numbers with $\rho >1$ and $1/3 \le \mu \le 1$. Further let 
$$ \delta = \frac{1}{2}(1+2\mu - \mu^2), \lambda = \delta \log(\rho).  \eqno{(2.3)} $$
Then 
$$ \log|\Lambda| \ge  - C A_1 A_2 B^2 - \sqrt{\omega \theta} B - \log(C' A_1 A_2 B^2), \eqno{(2.4)} $$
where
$$A_j \ge \max\{ 1, (\rho+1) \log(a_j) \}, A_1 A_2 \ge \lambda^2, j= 1,2, \eqno{(2.5)} $$
$$ B \ge \log(\rho) + \max\left\{ \frac{\log(2)}{2} , \lambda, 1.81 + \log(\lambda) + \log\left( \frac{b_1}{A_2} + \frac{b_2}{A_1} \right)\right\}, \eqno{(2.6)} $$
$$ \omega = 2 + 2 \left( 1 + \frac{1}{4H^2} \right)^{1/2} , \theta  = \frac{1}{2H} + \left( 1 + \frac{1}{4H^2} \right)^{1/2} , H = \frac{B}{\lambda}, \eqno{(2.7)} $$
$$ C = \frac{C_0 \mu }{\lambda^3 \delta}, C' = \frac{\sqrt{C_0 \omega \theta}}{\lambda^3}, \eqno{(2.8)} $$
$$C_0 = \left( \frac{\omega}{6} + \frac{1}{2} \left( \frac{\omega^2}{9} + \frac{ 8 \lambda \omega^{5/4} \theta^{1/4} }{3 \sqrt{A_1A_2 H}} + \frac{4}{3} \left(\frac{1}{A_1} + \frac{1}{A_2} \right) \frac{\lambda \omega}{H} \right)^{1/2} \right)^2
.  \eqno{(2.9)} $$
\end{lemma}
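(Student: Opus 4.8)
The statement is precisely M. Laurent's explicit lower bound for linear forms in two logarithms, specialized from algebraic numbers to positive rational integers; so the plan is not to reprove it from first principles but to obtain it as the $D = 1$ case of the general theorem in \cite{Lau}. In Laurent's setting one starts from a linear form $\Lambda = b_1 \log \alpha_1 - b_2 \log \alpha_2$ in the logarithms of two multiplicatively independent algebraic numbers $\alpha_1, \alpha_2$ lying in a number field of degree $D$, and the conclusion (2.4) is read off from his theorem once all the auxiliary quantities in (2.3) and (2.7)--(2.9) are identified with his. Concretely, I would take $\alpha_1 = a_1$ and $\alpha_2 = a_2$ in $\ratQ$, so that $D = 1$.

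First I would check that the hypotheses of the lemma are exactly the admissibility conditions of \cite{Lau} under this specialization. Since $\min\{a_1, a_2\} > 1$ and $\gcd(a_1, a_2) = 1$, the integers $a_1, a_2$ are multiplicatively independent and each logarithm $\log a_j$ is positive and real, so $\Lambda$ is a genuine real linear form and the absolute logarithmic height of $\alpha_j$ is simply $h(\alpha_j) = \log a_j$. With $D = 1$, the condition on $A_j$ in (2.5) is then the corresponding instance of Laurent's admissibility requirement, which bounds $A_j$ below in terms of $h(\alpha_j) = \log a_j$ and $|\log \alpha_j| = \log a_j$ (the factor $\rho + 1$ being the combination that arises at $D = 1$), while the quantity $B$ in (2.6) is the specialization of his lower bound on the measure of the size of the pair $(b_1, b_2)$. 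After matching $\rho$ and $\mu$ together with the derived quantities $\delta, \lambda$ in (2.3), the quantities $\omega, \theta, H$ in (2.7), and the constants $C, C', C_0$ in (2.8)--(2.9), the inequality (2.4) is then Laurent's conclusion verbatim.

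The substantive content — forming an auxiliary interpolation determinant associated with the two exponentials $\alpha_1^z$ and $\alpha_2^z$, bounding its modulus analytically by a Schwarz-type estimate that exploits the smallness of $\Lambda$, bounding it arithmetically from below by a Liouville estimate when it is nonzero, and then optimizing the free parameters to extract the explicit constants $\omega, \theta$ and $C_0$ — is exactly the theorem of \cite{Lau}, and I would invoke it as a black box rather than redevelop Baker's method. The only step requiring genuine care, and the one I expect to be the main obstacle, is the bookkeeping of the previous paragraph: one must verify that the conditions (2.5)--(2.6) as stated are no weaker than Laurent's admissibility hypotheses, so that his estimate applies without alteration, and in particular that the factor $\rho + 1$ in (2.5) and the definition of $B$ in (2.6) correctly reproduce the normalization under which the constants in (2.7)--(2.9) were derived.
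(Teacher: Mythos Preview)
Your approach is correct and essentially identical to the paper's: the paper simply states that this lemma is the special case of Theorem~2 of \cite{Lau} for $\gamma_1$ and $\gamma_2$ coprime positive integers, which is exactly your specialization to $D=1$ with $\alpha_j=a_j$. Your additional remarks on verifying the admissibility conditions are reasonable elaboration, but the core idea---invoke Laurent's theorem as a black box---is the same.
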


\begin{proof}  
This lemma is the special case of Theorem 2 of \cite{Lau} for $\gamma_1$ and $\gamma_2$ coprime positive integers.
\end{proof}

\begin{lemma}  
Under the assumptions of Lemma 2.7, if $\min\{ a_1, a_2 \} \ge 2704$ and $ b_1/A_2 > b_2 / A_1 > 240$, then
$$ \log| \Lambda | > -14.8365 \log(a_1) \log(a_2) \left( 1.8248 + \log\left( \frac{b_1}{\log(a_2)} + \frac{b_2}{\log(a_1)} \right) \right)^2.   \eqno{(2,.10)}  $$
\end{lemma}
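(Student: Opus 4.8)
The plan is to specialize Lemma 2.7 to a single fixed choice of the free parameters $\rho$ and $\mu$, and then simply read off (2.4). I would take $\rho$ and $\mu$ to be the specific numerical values determined by minimizing the leading constant of (2.4) along the curve on which
\[
 \log(\rho) + 1.81 + \log(\lambda) - \log(\rho+1) = 1.8248 ,
\]
where $\delta$ and $\lambda$ are given by (2.3); for concreteness, values near $\rho \approx 4.5$ and $\mu \approx 0.41$ work. Since $\min\{a_1,a_2\} \ge 2704$, we have $(\rho+1)\log(a_j) \ge (\rho+1)\log(2704) > 1$ and $(\rho+1)^2 \log(a_1)\log(a_2) \ge \lambda^2$, so I may legitimately choose $A_j = (\rho+1)\log(a_j)$ in (2.5), giving $A_1 A_2 = (\rho+1)^2 \log(a_1)\log(a_2)$. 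It is precisely this choice that turns the $A_1A_2$-dependence of (2.4) into the factor $\log(a_1)\log(a_2)$ of the target inequality.

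Next I would fix $B$. From $b_1/A_2 > b_2/A_1 > 240$ we get $b_1/A_2 + b_2/A_1 > 480$, so its logarithm exceeds $6$; since $\lambda$ is of moderate size, the third entry of the maximum in (2.6) dominates the other two. Taking $B$ equal to the least admissible value then yields
\[
 B = \log(\rho) + 1.81 + \log(\lambda) + \log\!\left( \frac{b_1}{A_2} + \frac{b_2}{A_1} \right) = 1.8248 + L , \qquad L := \log\!\left( \frac{b_1}{\log(a_2)} + \frac{b_2}{\log(a_1)} \right) ,
\]
upon writing $\log(b_1/A_2 + b_2/A_1) = L - \log(\rho+1)$ and invoking the defining equation for $\rho,\mu$. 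Hence $B^2$ is exactly the squared factor of the statement, and the leading term of (2.4) becomes $-C A_1 A_2 B^2 = -\kappa \log(a_1)\log(a_2)(1.8248+L)^2$ with $\kappa = C_0 \mu (\rho+1)^2/(\lambda^3 \delta)$.

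The remaining task, and the crux, is to show that $\kappa$ together with the two lower-order terms of (2.4) stays below $14.8365$. Because $B = 1.8248 + L > 8$ while $\lambda$ is of order one, the quantity $H = B/\lambda$ is large; so by (2.7) $\omega$ lies just above $4$ and $\theta$ just above $1$, and the two correction terms inside (2.9) are small, being controlled by $A_1A_2 \ge (\rho+1)^2(\log(2704))^2$ and by $1/H$. One checks that $\omega$, $\theta$ and $C_0$ are all decreasing in $H$, and that $C_0$ is also decreasing in $A_1A_2$; consequently their suprema over the admissible range occur at the smallest values of $H$ and of $A_1A_2$, i.e. at the corner where $b_1/A_2 + b_2/A_1$ equals its infimum $480$ and $a_1 = a_2 = 2704$. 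This reduces the estimate of $\kappa$ to a single finite numerical evaluation, which I expect to give $\kappa < 14.8365$ with a small but strictly positive margin.

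Finally I would absorb the trailing terms $-\sqrt{\omega\theta}\,B - \log(C' A_1 A_2 B^2)$ of (2.4) into that margin. The key is that, viewed as functions of the data, the saved quantity $(14.8365-\kappa)\log(a_1)\log(a_2)(1.8248+L)^2$ grows quadratically in $L$ and at least like $(\log(2704))^2$ in the $a_j$, whereas $\sqrt{\omega\theta}\,B$ grows only linearly in $L$ and $\log(C' A_1 A_2 B^2)$ only logarithmically; together with the lower bounds $L > \log(480)$ and $\log(a_1)\log(a_2) \ge (\log(2704))^2$, a uniform comparison then closes the inequality and yields the asserted bound. The genuinely delicate point is the monotonicity bookkeeping of the previous paragraph: I must verify that the worst case is indeed the stated corner, so that one numerical check certifies the constant $14.8365$ simultaneously for all admissible $b_1,b_2,a_1,a_2$.
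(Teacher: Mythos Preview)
Your proposal is correct and follows essentially the same route as the paper: specialize Lemma~2.7 with a fixed pair $(\rho,\mu)$ chosen so that $B$ collapses to $1.8248 + L$, take $A_j = (\rho+1)\log a_j$, evaluate $\omega,\theta,C_0,C,C'$ at the worst-case corner forced by the hypotheses, and absorb the two trailing terms of (2.4) into the leading one. The paper's actual parameters are $\rho = e^{1.575}\approx 4.83$ and $\mu = 1/3$ (not your $\rho\approx 4.5$, $\mu\approx 0.41$, though both sit on your defining curve), and instead of your monotonicity bookkeeping it simply plugs in the extremal values directly to get $C<0.4361$, bounds the trailing terms by $0.0002\,A_1A_2B^2$, and reads off $0.4363\,(\rho+1)^2<14.8365$.
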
  

\begin{proof}  
By Lemma 2.7, we may choose parameters 
$$ \rho = e^{1.575}, \mu = \frac{1}{3}.  \eqno{(2.11)}  $$
By (2.3) and (2.11), we have 
$$ \delta = \frac{7}{9}, \lambda = 1.225.  \eqno{(2.12)} $$
Since $\min\{a_1, a_2 \} > 2703$, by (2.5), (2.11) and (2.12), we can set 
$$ A_j = 5.8314 \log(a_j), j = 1,2.  \eqno{(2.13)} $$
Hence, by (2.13), we have 
$$A_j > 46.0803, j = 1,2. \eqno{(2.14)} $$
By (2.6), (2.11) and (2.12), we can set 
$$ B = 3.5880 + \log\left( \frac{b_1}{A_2} + \frac{b_2}{A_1} \right).  \eqno{(2.15)} $$
Since $b_1/A_2 > b_2/A_1 > 240$, by (2.15) and (2.7), we have 
$$ B > 9.7617 \eqno{(2.16)} $$
and 
$$ H > 7.9688.  \eqno{(2.17)} $$
Therefore, by (2.7) and (2.17), we get 
$$ \omega < 4.0040, \theta < 1.0648.  \eqno{(2.18)} $$
Further, by (2.9), (2.12), (2.14), (2.17) and (2.18), we have 
$$ C_0 < 1.8706, \eqno{(2.19)} $$
and by (2.8), we get
$$ C < 0.4361, C' < 2.0829.  \eqno{(2.20)} $$

From (2.4) and (2.20), we have 
$$ \log|\Lambda| > -\left( 0.4361 + \frac{ \sqrt{ \omega \theta }}{A_1 A_2 B} + \frac{ \log(2.0829 A_1 A_2 B^2)}{A_1 A_2 B^2} \right) A_1 A_2 B^2.  \eqno{(2.21)} $$
By (2.14), (2.16) and (2.18), we get 
$$ \frac{ \sqrt{ \omega \theta }}{A_1 A_2 B} < 1.0026 \times 10^{-4}.  \eqno{(2.22)}  $$
Since $f(t) = \log(t) /  t$ is a decreasing function for $t>e$, by (2.14) and (2.16), we have 
$$  \frac{ \log(2.0829 A_1 A_2 B^2)}{A_1 A_2 B^2} < \frac{ \log(2.0829 \times 46.0803^2 \times 9.7617^2 ) }{ 46.0803^2 \times 9.7617^2} < 0.6401 \times 10^{-4}.  \eqno{(2.23)} $$
Thus, by (2.13), (2.15), (2.21), (2.22) and (2.23), we obtain
$$ \log|\Lambda| > -0.4363  A_1 A_2 B^2 > -14.8365 \log(a_1) \log(a_2) \left( 1.8248 + \log\left( \frac{b_1}{\log(a_2)} + \frac{b_2}{\log(a_1)} \right) \right)^2.  \eqno{(2.24)} $$
The lemma is proved.  
\end{proof}

\section{Proof of Theorem} 

We mow assume that $mn \equiv 2 \bmod 4$, $m>30.8 n$ and (1.1) has a solution $(x,y,z)$ with $(x,y,z) \ne (2,2,2)$.  By Lemma 2.6, the solution $(x,y,z)$ satisfies 
$$ (m^2-n^2)^x + 2mn = (m^2+n^2)^z, x,z \in \natN, x \equiv 0 \bmod 2, z \equiv 1 \bmod 2.  \eqno{(3.1)} $$
By Lemma 2.4, $m^2+n^2$ and $m^2-n^2$ satisfy (2.2).  Since $x \equiv 0 \bmod 2$, if $x\le z$ then from (3.1) we get 
$ 2mn = (m^2 + n^2)^z-(m^2-n^2)^x \ge (m^2 + n^2)^x-(m^2-n^2)^x \ge (m^2+n^2)^2 - (m^2-n^2)^2 = (2mn)^2 > 2mn,$
a contradiction.  Therefore, we have 
$$ x > z \eqno{(3.2)} $$
and $x-z$ an odd integer.  

Since $x \equiv 0 \bmod 2$ and $(m^2-n^2)^2 \equiv (m^2+n^2)^2 \bmod (2mn)$, we have $(m^2-n^2)^x \equiv (m^2+n^2)^x \bmod (2mn)$.  Further, since $\gcd(2mn, m^2+n^2)=1$, by (3.1) and (3.2), we get
$$(m^2+n^2)^{x-z} \equiv 1 \bmod (2mn).  \eqno{(3.3)} $$
Hence, by Lemma 2.5, we see from (3.3) that the case $x-z=1$ can be removed.  So we have 
$$ x-z \ge 3.  \eqno{(3.4)}$$
Let $k = m/n$.  By (3.1) and (3.4), we have 
$$ (m^2-n^2)^3 \le (m^2-n^2)^{x-z} < \left( \frac{ m^2+n^2}{m^2-n^2} \right)^z = \left( 1+ \frac{2}{k^2-1} \right)^z.  \eqno{(3.5)} $$
Notice that $\log(1+t) < t$ for any $t>0$.  By (3.5), we get 
$$ 3 \log(m^2-n^2) < \frac{2z}{k^2-1}.  \eqno{(3.6)} $$
Since $k > 30.8$, we see from (3.6) that 
$$ \frac{z}{\log(m^2-n^2)} > \frac{3}{2} (k^2-1) > 1421.46.  \eqno{(3.7)}$$

On the other hand, by (3.1), we have 
$$ 
\begin{aligned} 
z \log(m^2+n^2) =& \log( (m^2-n^2)^x + 2mn) =   \log((m^2-n^2)^x) + \\  
&\log\left( 1 + \frac{2mn}{(m^2-n^2)^x} \right) < x \log(m^2-n^2) + \frac{2mn}{(m^2-n^2)^x}.
\end{aligned}
 \eqno{(3.8)}  
$$
Let $(a_1, a_2, b_1, b_2)= ( m^2+n^2, m^2-n^2, z, x)$ and $\Lambda = z \log(m^2+n^2) - x \log(m^2-n^2)$.  By (3.8), we have 
$$ 0 < \Lambda < \frac{2mn}{(m^2-n^2)^x}.  \eqno{(3.9)} $$
Further, since $(m^2-n^2)^x > (m^2-n^2)^2 \ge (m+n)^2 > 2mn$, we see from (3.1) that $2(m^2-n^2)^x > (m^2+n^2)^z$.  Hence, by (3.9), we get 
$$ 0 < \Lambda < \frac{4mn}{(m^2+n^2)^z},  \eqno{(3.10)} $$
whence we obtain
$$ \log(4mn) - \log|\Lambda| > z \log(m^2+n^2).  \eqno{(3.11)} $$
Since $x \ge 4$, by (2.2), (3.7) and (3.8), we have
\begin{equation*}
\begin{aligned} 
 \frac{z}{\log(m^2-n^2)} > \frac{x}{\log(m^2+n^2)} > \frac{z}{\log(m^2-n^2)} - \frac{2mn}{ (m^2-n^2)^x \log(m^2+n^2)} \frac{1}{\log(m^2-n^2)} \\
> \frac{z}{\log(m^2-n^2)} - \frac{1}{ (m+n)^2 (\log(m+n))^2} >  1421.46 - 5 \times 10^{-8}, 
\end{aligned}
\eqno{(3.12)} 
\end{equation*}
whence we get 
$$ \frac{z}{5.8314 \log(m^2-n^2)} > \frac{x}{5.8314 \log(m^2+n^2)} > 240.  \eqno{(3.13)} $$
Therefore, by Lemma 2.8, we see from (2.2) and (3.13) that 
$$ \log|\Lambda| >  -14.8365  \log(m^2+n^2) \log(m^2-n^2) \left( 1.8248 + \log\left( \frac{z}{\log(m^2-n^2)} + \frac{x}{\log(m^2+n^2)} \right) \right)^2.  \eqno{(3.14)} $$
Substituting (3.14) into (3.11), by (3.13), we have 
$$ \frac{ \log(4mn)}{ \log(m^2+n^2) \log(m^2-n^2)} + 14.8365 \left( 1.8248 + \log\left(\frac{2z}{\log(m^2-n^2)} \right) \right)^2 > \frac{z}{\log(m^2-n^2)}. \eqno{(3.15)} $$
Since $m^2+n^2 > 2mn$ and $m^2-n^2 \ge m+n$, by (2.2), we have
$$ \frac{\log(4mn)}{\log(m^2+n^2) \log(m^2-n^2)} < \frac{1}{\log(m+n)} \left( \frac{\log(2)}{\log(m+n)} + 1 \right) < 0.1266.  \eqno{(3.16)} $$
Hence, by (3.15) and (3.16), we get 
$$ 0.1266 + 14.8365 \left(1.8248 + \log\left( \frac{2z}{\log(m^2-n^2)} \right) \right)^2 > \frac{z}{\log(m^2-n^2)}.  \eqno{(3.17)} $$

Let $f(t) = t - 0.1266 - 14.8365 (1.8248 + \log(2t))^2$.  We see from (3.17) that 
$$ f\left( \frac{z}{\log(m^2-n^2)} \right) <0.  \eqno{(3.18)} $$
Since $f'(t) = 1 - 29.6730 (1.8248 + \log(2t))/t$ and $f'(t)>0$ for $t>250$, where $f'(t)$ is the derivative of $f(t)$, we have $f(t) > f(1420) > 0$ for $t>1420$.  Therefore, by (3.18), we get $z / \log(m^2-n^2) < 1420$, which contradicts (3.7).  Thus, if $mn \equiv 2 \bmod 4$ and $m>30.8 n$, then (1.1) has only the solution $(x,y,z)=(2,2,2)$.  The theorem is proved.

\bigskip

\noindent
Acknowledgment: The author would like to thank Prof. R. Scott and Prof. R. Styer for reading the original manuscript carefully and giving valuable advice.  Especially thanks to Prof. R. Styer for checking the calculations and providing other technical assistance.

\bigskip 

Maohua Le

Institute of Mathematics

Lingnan Normal College

Zhangjiang, Guangdong  524048

China

\end{document}